%
%
%
%
\documentclass[11pt]{amsart}

\usepackage[utf8]{inputenc}
\usepackage{amsmath,amsxtra,amssymb,latexsym,amscd,amsthm}
\usepackage[linesnumbered,ruled,vlined]{algorithm2e}
\setlength{\textwidth}{6.5in}
\setlength{\oddsidemargin}{0in}
\setlength{\evensidemargin}{0in}
\setlength{\textheight}{8.0 in}

\usepackage{csquotes}

\usepackage{tikz-cd}
\usepackage{tkz-euclide}

\newtheorem{theorem}{Theorem}[section]

\theoremstyle{definition}

\newtheorem{example}[theorem]{Example}

\theoremstyle{remark}

\numberwithin{equation}{section}



\theoremstyle{break}
\newtheorem{defi}{Definition}[section]
\newtheorem{thm}{Theorem}[section]
\newtheorem{prop}{Proposition}[section]
\newtheorem{lem}{Lemma}[section]

\theoremstyle{remark}
\newtheorem{rem}{Remark}[section]
\newtheorem*{ackn}{Acknowledgements}

\newcommand{\Sp}{\operatorname{Spec}}
\newcommand{\pr}{\text{Proj}}
\newcommand{\spf}{\text{Specf}}
\newcommand{\F}{\mathbb{F}_2}

\newcommand{\proj}{\operatorname{pr}}
\newcommand{\Aut}{\operatorname{Aut}}
\begin{document}

\title{A counter-example to the equivariance structure on semi-universal deformation}

\author{ An Khuong DOAN}
\address{An Khuong DOAN, IMJ-PRG, UMR 7586, Sorbonne Université,  Case 247, 4 place Jussieu, 75252 Paris Cedex 05, France}
\email{an-khuong.doan@imj-prg.fr }
\thanks{ }

\subjclass[2010]{14D15, 14B10, 13D10}

\date{January 22, 2019.}

\dedicatory{ }

\keywords{Deformation theory, Moduli theory, Equivariance structure}

\begin{abstract} If $X$ is a projective variety and $G$ is an algebraic group acting algebraically on $X$, we provide a counter-example to the existence of a $G$-equivariant extension on the formal semi-universal deformation of $X$.
\end{abstract}

\maketitle
\tableofcontents

\section*{Introduction}
Let $X$ be an algebraic variety defined over a field $k$ of characteristic zero. Due to Schlessinger's work in [5], the existence of a formal semi-universal deformation (unique up to non-canonical isomorphism), which contains all the information of small deformations of $X$, is assured provided that $H^1(X,\mathcal{T}_X)$ and $H^2(X,\mathcal{T}_X)$ are finite dimensional vector spaces. These conditions realise for example, if $X$ is a complete scheme over $k$ or an affine scheme with at most isolated singularities (see [6, Corollary 2.4.2]). Now, we equipe $X$ with an action of an algebraic group $G$ defined over $k$. One question arising naturally is whether there exists a formal semi-universal deformation $\pi:$ $\mathcal{X} \rightarrow S$ of $X$, on which we can provide a $G$-action extending the given one on $X$. The answer is positive in the case that $G$ satisfies some vanishing conditions on its cohomology groups, i.e. $H^1(G,-)=0$ and $H^2(G,-)=0$ for a class of $G$-modules determined by $X$. In particular, these vanishing conditions hold for linearly reductive groups (see [4]). However, we do not know if there exists a non-reductive group whose action on $X$ does not extend to the formal semi-universal deformation of $X$. Therefore, we wish to give an example which illustrates this phenomenon. More precisely, we prove that the action of the automorphism group of the second Hirzebruch surface $\F$ does not extend to its formal semi-universal deformation. 

Our proof goes as follows. First, we find a nice presentation of $G:=\Aut(\F)$. Then we construct a formal semi-universal deformation $\widehat{\mathcal{X}}$ of $\F$. It turns out that $G$ is non-reductive and that the Lie algebra of $G$ is a $7$-dimensional vector space. As a matter of fact, we obtain seven vector fields on $\F$ with Lie bracket relations induced by those in $\text{Lie}(G)$. Next, we describe the general form of formal vector fields on $\widehat{\mathcal{X}}$. Finally, we conclude the paper by means of contradiction. Suppose that the $G$-action on $\F$ does extend to a $G$-action on $\widehat{\mathcal{X}}$ then we also have seven formal vector fields on $\widehat{\mathcal{X}}$ whose restrictions on the central fiber are nothing but our initial ones on $\F$. By manipulating these vector fields with a filtration $F$ given by the vanishing order at $0$, we obtain the existence of a $3$-dimensional abelian Lie subalgebra in $\mathfrak{sl}_2(K)\times \mathfrak{sl}_2(K)$, where $\mathfrak{sl}_2(K)$ is the special linear group and $K$ is the field of formal Laurent power series $\mathbb{C}[[ t,t^{-1} ]]$, which is not the case. A remark is in order. Since the semi-universal family of $\F$ is in fact not universal, another possible way to obtain a contradiction is to use Wavrik's criterion (see [7, Theorem 4.1]) but the calculations are rather complicated.
\begin{ackn} I would like to thank Prof. Bernd Siebert for many useful discussions. Actually, I learned the idea of using the extension of vector fields and their relations as obstructions to the extension of the group action from an unpublished paper of his. This provides a strategy to attack the problem. I am specially thankful to Prof. Julien Grivaux for his careful reading and his comments which help to improve the manuscript. Finally, I am warmly grateful to the referee whose work led to a remarkable improvement of the paper.

\end{ackn}
\section{Formal schemes and formal deformations}
In this section, by $k$, we always mean a field of characteristic zero. We begin by recalling the definition of formal schemes. For more details, the readers are referred to [2, Chapter III. 9].
\begin{defi}
Let $X$ be a noetherian scheme and let $Y$ be a closed subscheme defined by a sheaf of ideals $\mathcal{J}.$ Then we define the formal completion of $X$ along $Y$, denoted $(\widehat{X},\mathcal{O}_{\widehat{X}})$ (sometimes just $\widehat{X}$), to be the following ringed space. We take the topological space $Y$, and on it the sheaf of rings $\mathcal{O}_{\widehat{X}}=\varprojlim\mathcal{O}_{X}/\mathcal{I}^n$. Here we consider each $\mathcal{O}_{X}/\mathcal{I}^n$ as sheaf of rings on $Y$
\end{defi} 
\begin{rem} For each $n$, let $X_n=(X,\mathcal{O}_{X}/\mathcal{I}^n)$. Then we obtain a sequence of closed immersions of schemes $$X_1\rightarrow X_2\rightarrow \cdots\rightarrow X_n\rightarrow\cdots.$$
This expression is helpful in the sequel.
\end{rem}
\begin{defi}
A noetherian formal scheme is a locally ringed space $(\mathfrak{X},\mathcal{O}_\mathfrak{X})$ which has a finite open cover $\lbrace \mathfrak{U}_i\rbrace$ such that for each $i$, the pair $(\mathfrak{U}_i,\mathcal{O}_\mathfrak{X}\mid_{\mathfrak{U}_i})$ is isomorphic, as a locally ringed space, to the completion of some noetherian scheme $X_i$ along a closed subscheme $Y_i$. A morphism of noetherian formal schemes is a morphism as locally ringed spaces.
\end{defi}
\begin{example} If $X$ is any noetherian scheme, and $Y$ is a closed subscheme then the formal completion $\widehat{X}$ of $X$ along $Y$ is a formal scheme.
\end{example}
\begin{example} For $X=\mathbb{C}^1=\Sp(\mathbb{C}[t])$ and $Y=\lbrace 0 \rbrace$, the formal scheme $\widehat{X}$ is the locally ringed space $(Y,\mathcal{O}_{\widehat{X}})$, where the structure sheaf $\mathcal{O}_{\widehat{X}}$ is $\mathbb{C}[[t]]$. We denote $\text{Specf}(\mathbb{C}[[t]]):=(Y,\mathcal{O}_{\widehat{X}})$.
\end{example}Let $(\mathfrak{X},\mathcal{O}_\mathfrak{X})$ be a noetherian formal scheme. We would like to define \textit{formal vector fields} on $\mathfrak{X}$. Let $\lbrace \mathfrak{U}_i\rbrace$ be a finite open cover  of $\mathfrak{X}$ such that for each $i$, the pair $(\mathfrak{U}_i,\mathcal{O}_\mathfrak{X}\mid_{\mathfrak{U}_i})$ is the formal completion $(\widehat{X_i},\mathcal{O}_{\widehat{X}_i})$ of some noetherian scheme $X_i$ along a closed subscheme $Y_i$. By Remark 1.1, for each $i$ we have a sequence of closed immersions of schemes $$X_{i1}\rightarrow X_{i2}\rightarrow \cdots\rightarrow X_{in}\rightarrow\cdots.$$ 
\begin{defi} A formal vector field on a noetherian formal scheme $\mathfrak{X}$ is a sequence of vector fields $\lbrace v_{i,n} \rbrace$ such that 
\begin{enumerate} 
\item[($i$)] Each $v_{i,n}$ is a usual vector field on the scheme $X_{i,n}$, 
\item[($ii$)] $v_{i,n}$ induces $v_{i,n-1}$ via the natural inclusion $X_{i,n-1} \rightarrow X_{i,n}$,
\item[($iii$)] $v_{i,n}\mid_{X_{i,n} \cap X_{j,n}}= v_{j,n}\mid_{X_{j,n} \cap X_{i,n}}$.
\end{enumerate}
\end{defi}
Next, we turn to the notion of infinitesimal deformations and that of formal deformations. Let $X$ be an algebraic scheme and let $A$ be an artinian local $k$-algebra with residue field $k$. An infinitesimal deformation of $X$ is a deformation of $X$ over the scheme $\Sp(A)$, i.e. a commutative diagram
\begin{center}
\begin{tikzpicture}[every node/.style={midway}]
  \matrix[column sep={8em,between origins}, row sep={3em}] at (0,0) {
    \node(Y){$X$} ; & \node(X) {$\mathcal{X}$}; \\
    \node(M) {$\Sp(k)$}; & \node (N) {$\Sp(A)$};\\
  };
  
  \draw[->] (Y) -- (M) node[anchor=east]  {}  ;
  \draw[->] (Y) -- (X) node[anchor=south]  {$i$};
  \draw[->] (X) -- (N) node[anchor=west] {$\pi$};
  \draw[->] (M) -- (N) node[anchor=north] {};.
\end{tikzpicture}
\end{center}
where $\pi:\;\mathcal{X}\rightarrow \Sp(A)$ is a flat surjective morphism of schemes. 

Now, let $A$ be a complete local noetherian $k$-algebra with the unique maximal ideal $\mathfrak{m}$ and with residue $k$.
\begin{defi}  A formal deformation of $X$ over $A$ is a sequence $\lbrace\nu_n \rbrace$ of infinitesimal deformations of $X$, in which $\nu_n$ is represented by a deformation
\begin{center}
\begin{tikzpicture}[every node/.style={midway}]
  \matrix[column sep={9em,between origins}, row sep={3em}] at (0,0) {
    \node(Y){$X$} ; & \node(X) {$\mathcal{X}_n$}; \\
    \node(M) {$\Sp(k)$}; & \node (N) {$\Sp(A_n)$};\\
  };
  
  \draw[->] (Y) -- (M) node[anchor=east]  {}  ;
  \draw[->] (Y) -- (X) node[anchor=south]  {$f_n$};
  \draw[->] (X) -- (N) node[anchor=west] {$\pi_n$};
  \draw[->] (M) -- (N) node[anchor=north] {};.
\end{tikzpicture}
\end{center}
 where $A_n =A/\mathfrak{m}^{n+1}$, such that for all $n\geq 1$, $\nu_n$ induces $\nu_{n-1}$ by pullback under the natural inclusion $\Sp(A_{n-1}) \rightarrow \Sp(A_{n})$, i.e. $\nu_{n-1}$ is also represented by the deformation
 \begin{center}
\begin{tikzpicture}[every node/.style={midway}]
  \matrix[column sep={9em,between origins}, row sep={3em}] at (0,0) {
    \node(Y){$X$} ; & \node(X) {$\mathcal{X}_n\times_{\Sp(A_n)}\Sp(A_{n-1})$}; \\
    \node(M) {$\Sp(k)$}; & \node (N) {$\Sp(A_{n-1})$};\\
  };
  
  \draw[->] (Y) -- (M) node[anchor=east]  {}  ;
  \draw[->] (Y) -- (X) node[anchor=south]  {$f_{n-1}$};
  \draw[->] (X) -- (N) node[anchor=west] {$\pi_{n-1}$};
  \draw[->] (M) -- (N) node[anchor=north] {};.
\end{tikzpicture}
\end{center}
\end{defi}
In the language of formal schemes, we can write $\lbrace \nu_n \rbrace$ as the morphism of formal schemes 
$$\widehat{\pi}:\widehat{\mathcal{X}}\rightarrow \spf(A)$$ where 
$$\widehat{\mathcal{X}}=(X,\lim_{\leftarrow}\mathcal{O}_{\mathcal{X}_n})\text{ and }\widehat{\pi} =\lim_{\leftarrow} \pi_n.$$ Here, $\mathcal{O}_{\mathcal{X}_n}$ is the structure sheaf on $\mathcal{X}_n$ and $\spf(A)$ is the formal scheme obtained by completing $\Sp(A)$ along its closed point, which corresponds to the unique maximal ideal of $A$. The easiest way to construct formal deformations is to build out of usual ones. This leads to the definition of \textit{formal deformation associated to a given deformation}. Let $X$ be a projective scheme and let $\nu$ be a deformation represented by 
\begin{center}
\begin{tikzpicture}[every node/.style={midway}]
  \matrix[column sep={9em,between origins}, row sep={3em}] at (0,0) {
    \node(Y){$X$} ; & \node(X) {$\mathcal{X}$}; \\
    \node(M) {$\Sp(k)$}; & \node (N) {$(S,s)$};\\
  };
  
  \draw[->] (Y) -- (M) node[anchor=east]  {}  ;
  \draw[->] (Y) -- (X) node[anchor=south]  {$f$};
  \draw[->] (X) -- (N) node[anchor=west] {$\pi$};
  \draw[->] (M) -- (N) node[anchor=north] {};.
\end{tikzpicture}
\end{center} where $S=\Sp(B)$ for some $k$-algebra of finite type $B$  and $s$ is a $k$-rational point of $S$.
\begin{defi} The formal deformation associated to $\nu$ is defined to be the sequence of deformations $\lbrace \nu_n \rbrace$ where each $\nu_n$ is the pullback of $\nu$ under the natural closed embedding
$$S_n:=\Sp(\mathcal{O}_{S,s}/\mathfrak{m}_s^{n+1}) \rightarrow S$$ where $\mathfrak{m}_s$ is the unique maximal ideal of the local ring $\mathcal{O}_{S,s}$.

\end{defi}
\begin{rem}
Note that $\lbrace \nu_n \rbrace$ is formal because of the isomorphism 
$$\mathcal{O}_{S,s}/\mathfrak{m}_s^{n+1}\cong \widehat{\mathcal{O}}_{S,s}/\widehat{\mathfrak{m}}_s^{n+1}$$ for all $n$.
\end{rem}

To end this section, we introduce a very interesting kind of (formal) deformations, namely, the kind of $G$-equivariant (formal) ones, which is of central interest of the article. Let $G$ be a $k$-algebraic group acting algebraically on a projective variety $X$ and $A$ an artinian local $k$-algebra.
\begin{defi} A $G$-equivariant infinitesimal deformation of $X$ over $\Sp(A)$ is a usual deformation of $X$, i.e. a commutative diagram \begin{center}
\begin{tikzpicture}[every node/.style={midway}]
  \matrix[column sep={8em,between origins}, row sep={3em}] at (0,0) {
    \node(Y){$X$} ; & \node(X) {$\mathcal{X}$}; \\
    \node(M) {$\Sp(k)$}; & \node (N) {$\Sp(A)$};\\
  };
  
  \draw[->] (Y) -- (M) node[anchor=east]  {}  ;
  \draw[->] (Y) -- (X) node[anchor=south]  {$i$};
  \draw[->] (X) -- (N) node[anchor=west] {$\pi$};
  \draw[->] (M) -- (N) node[anchor=north] {};.
\end{tikzpicture}
\end{center}
where $\mathcal{X}$ and $\Sp(A)$ are equipped with $G$-actions in a way that any map appearing in the above diagram is $G$-equivariant. In particular, the restriction of the $G$-action on $\mathcal{X}$ on the central fiber is nothing but the initial $G$-action on $X$.
\end{defi}

Finally, we give the definition of $G$-equivariant formal deformations and then we show how to produce formal vector fields from $G$-equivariant formal deformations.
 
\begin{defi}   A $G$-equivariant formal deformation of $X$ over a complete local noetherian $k$-algebra $A$ with the unique maximal ideal $\mathfrak{m}$  is a formal deformation of $X$, i.e. a sequence $\lbrace\nu_n \rbrace$ of infinitestimal deformations of $X$, in which $\nu_n$ is represented by a $G$-infinitesimal deformation
\begin{center}
\begin{tikzpicture}[every node/.style={midway}]
  \matrix[column sep={9em,between origins}, row sep={3em}] at (0,0) {
    \node(Y){$X$} ; & \node(X) {$\mathcal{X}_n$}; \\
    \node(M) {$\Sp(k)$}; & \node (N) {$\Sp(A_n)$};\\
  };
  
  \draw[->] (Y) -- (M) node[anchor=east]  {}  ;
  \draw[->] (Y) -- (X) node[anchor=south]  {$f_n$};
  \draw[->] (X) -- (N) node[anchor=west] {$\pi_n$};
  \draw[->] (M) -- (N) node[anchor=north] {};.
\end{tikzpicture}
\end{center}
 where $A_n =A/\mathfrak{m}^{n+1}$, such that for all $n\geq 1$, the $G$-equivariant deformation $\nu_{n}$  induces the $G$-equivariant deformation $ \nu_{n-1}$ by pullback under the natural inclusion $\Sp(A_{n-1}) \rightarrow \Sp(A_{n})$.
\end{defi}

As before, we can write $\lbrace \nu_n \rbrace$ as the $G$-equivariant morphism of formal schemes 
$$\widehat{\pi}:\widehat{\mathcal{X}}\rightarrow \spf(A)$$ where 
$$\widehat{\mathcal{X}}=(X,\lim_{\leftarrow}\mathcal{O}_{\mathcal{X}_n})\text{ and }\widehat{\pi} =\lim_{\leftarrow} \pi_n.$$ Here, the $G$-equivariance of $\widehat{\pi}$ means that $\widehat{\pi}$ is an inverse limit of $G$-equivariant morphisms of schemes $\pi_n$. On one hand, on each $n^{\text{th}}$-infinitesimal neighborhood, $G$-actions on $\mathcal{X}_n$ and on $\Sp(A_n)$ induce vector fields on $\mathcal{X}_n$ and on $\Sp(A_n)$, respectively. They are related by the fact that the differential of $\pi_n$ always maps the former ones to the latter ones. On the other hand, these induced vector fields on $\mathcal{X}_{n}$ and on $\Sp(A_{n})$ are also induced by those on $\mathcal{X}_{n+1}$ and on  $\Sp(A_{n+1})$, respectively, via the natural inclusion $\Sp(A_{n}) \rightarrow \Sp(A_{n+1})$.  Therefore, we obtain \textit{formal vector fields}, induced by the $G$-actions, on $\widehat{\mathcal{X}}$ and on $\spf(A)$, respectively.
\section{The second Hirzebruch surface and its automorphism group} For the rest of the paper, we assume that $k$ is the field of complex numbers $\mathbb{C}$. The geneneral linear group $\text{GL}(2,\mathbb{C})$ has an obvious linear action on $\mathbb{C}^2$. This induces an action on the $\mathbb{C}$-vector space of polynomials in two variables $\mathbb{C}[X,Y]$. Since the subspace of homogeneous polynomials of degree $2$, denoted by $\mathbb{C}[X,Y]_2$, is $\text{GL}(2,\mathbb{C})$-invariant then we have a $\text{GL}(2,\mathbb{C})$-action on $\mathbb{C}[X,Y]_2$. More precisely, for $g=\begin{pmatrix}
a &b \\c 
 &d 
\end{pmatrix} \in \text{GL}(2,\mathbb{C})$ and $f=a_0X^2+a_1XY+a_2Y^2 \in \mathbb{C}[X,Y]_2$, the action of $g$ on $f$ is given by the linear substitution 
$$\begin{pmatrix}
X\\Y 

\end{pmatrix}:=\begin{pmatrix}
a &b \\c 
 &d 
\end{pmatrix}\begin{pmatrix}
X\\Y 

\end{pmatrix},$$ i.e. 
\begin{align*}
 g.f&= a_0\left (aX+bY  \right )^2+a_1\left (aX+bY  \right )\left (cX+dY  \right )+a_2\left (cX+dY  \right )^2 \\ 
 &=\left ( a^2a_0+aca_1+ c^2a_2 \right )X^2+\left ( 2aba_0+(ad+bc)a_1+2cda_2 \right )XY+\left ( b^2a_0+bda_1+ d^2a_2 \right )Y^2 .
\end{align*}
Identifying $\mathbb{C}[X,Y]_2$ with $\mathbb{C}^3$, the corresponding action on $\mathbb{C}^3$ can be written as 
$$g.(a_0,a_1,a_2)= \begin{pmatrix}
a^2 &ac  &c^2 \\2ab 
 &ad+bc  &2cd \\b^2 
 &bd  &d^2 
\end{pmatrix}\begin{pmatrix}
a_0\\a_1 
\\a_2 
\end{pmatrix}.$$ This action gives rise to an algebraic group $H$ which is the semi-product of $\mathbb{C}^3$ and  $\text{GL}(2,\mathbb{C})$, i.e. 
$$H:=\mathbb{C}^{3} \rtimes \text{GL}(2,\mathbb{C}).$$ This is a non-reductive linear group. Recall that an algebraic group $K$ is reductive if the greatest connected normal subgroup $R_u(K)$ of $K$ is trivial. In our case, $R_u(H)= \mathbb{C}^3$.

Next, we recall the definition of the second Hirzebruch surface. Let $\mathbb{P}(\mathcal{O}_{\mathbb{P}^1}(2)\oplus \mathcal{O}_{\mathbb{P}^1})$ be the projectivization of $\mathcal{O}_{\mathbb{P}^1}(2)\oplus \mathcal{O}_{\mathbb{P}^1}$, where $\mathcal{O}_{\mathbb{P}^1}$ is the structure sheaf of the projective space $\mathbb{P}^1$.
\begin{defi} The second Hirzebruch surface is defined to be  $\mathbb{P}(\mathcal{O}_{\mathbb{P}^1}(2)\oplus \mathcal{O}_{\mathbb{P}^1})$.
\end{defi}
\begin{prop} The second Hirzebruch surface is isomorphic to the variety $$\F:=\{ ([x:y:z],[u:v])  \in \mathbb{P}^2 \times \mathbb{P}^1 | yv^2=zu^2\}.$$
\end{prop}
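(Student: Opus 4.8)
The plan is to realize both surfaces as Zariski-locally trivial $\mathbb{P}^1$-bundles over $\mathbb{P}^1$ and to check that they are obtained by gluing the same two trivial charts along the same clutching automorphism. Throughout, cover $\mathbb{P}^1$ by the two standard affine charts $U_0=\{u\neq 0\}$ and $U_1=\{v\neq 0\}$, with coordinates $s=v/u$ on $U_0$ and $t=u/v$ on $U_1$, so that $s=t^{-1}$ on $U_0\cap U_1$.

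First I would analyze the variety $V:=\{yv^2=zu^2\}\subset\mathbb{P}^2\times\mathbb{P}^1$ through the second projection $p\colon V\to\mathbb{P}^1$. Over $U_0$ the defining equation dehomogenizes to $z=s^2y$, so $([x:y:z],s)\mapsto([x:y],s)$ identifies $p^{-1}(U_0)$ with $\mathbb{P}^1\times U_0$ (the coordinate $z$ being recovered as $s^2y$); symmetrically, using $y=t^2z$, the map $([x:y:z],t)\mapsto([x:z],t)$ identifies $p^{-1}(U_1)$ with $\mathbb{P}^1\times U_1$. In particular every fibre of $p$ is the line in $\mathbb{P}^2$ cut out by $yv^2-zu^2=0$, so $p$ is a $\mathbb{P}^1$-bundle. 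On the overlap, a point of $V$ satisfies $z=s^2y$ and $y=t^2z$, and since $s^2=t^{-2}$ the two trivializations are related by
\[
([x:y],s)\ \longleftrightarrow\ ([x:z],t)=([x:s^2y],t)=([t^2x:y],t),
\]
so the transition automorphism of $\mathbb{P}^1\times(U_0\cap U_1)$ is, in homogeneous fibre coordinates $[\xi:\eta]$, the map $[\xi:\eta]\mapsto[t^2\xi:\eta]$.

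Next I would compare this with $\F=\mathbb{P}(\mathcal{O}_{\mathbb{P}^1}(2)\oplus\mathcal{O}_{\mathbb{P}^1})$. By the very definition of the projectivization, $\F$ is glued from $\mathbb{P}^1\times U_0$ and $\mathbb{P}^1\times U_1$ along $U_0\cap U_1$ by the transition matrix of the bundle $\mathcal{O}_{\mathbb{P}^1}(2)\oplus\mathcal{O}_{\mathbb{P}^1}$, namely the diagonal matrix with entries $t^2$ (the transition function of $\mathcal{O}_{\mathbb{P}^1}(2)$ from $U_0$ to $U_1$) and $1$; projectively this is again $[\xi:\eta]\mapsto[t^2\xi:\eta]$. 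Here one is free to use any of the standard conventions for $\mathbb{P}(-)$: twisting by $\mathcal{O}_{\mathbb{P}^1}(-2)$ or passing to the dual is harmless, since $\mathbb{P}(\mathcal{E})\cong\mathbb{P}(\mathcal{E}\otimes\mathcal{L})$ for any line bundle $\mathcal{L}$ and, in rank $2$, $\mathcal{E}^{\vee}\cong\mathcal{E}\otimes(\det\mathcal{E})^{-1}$, so the clutching matrix changes only by an overall scalar and hence induces the same map on $\mathbb{P}^1$. Since $\F$ and $V$ are thus glued from the same two pieces by the same automorphism, the chart identifications above patch to an isomorphism $\F\xrightarrow{\ \sim\ }V$ over $\mathbb{P}^1$. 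Explicitly it reads $([x:y],[u:v])\mapsto([xu^2:yu^2:yv^2],[u:v])$ in the chart $U_0$ and $([x:z],[u:v])\mapsto([xv^2:zu^2:zv^2],[u:v])$ in the chart $U_1$, and one checks at once that on $U_0\cap U_1$ both formulas recover the point $[x:y:z]\in V$, so they do glue; the inverse is $p$ followed by the fibrewise identification of each fibre of $V$ with the corresponding $\mathbb{P}^1$.

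The proposition is elementary, and there is no deep obstacle: the only point that requires care is the bookkeeping — keeping track of which summand of $\mathcal{O}_{\mathbb{P}^1}(2)\oplus\mathcal{O}_{\mathbb{P}^1}$ carries the nontrivial transition function, and matching the fibre-coordinate conventions between the two charts. Once this is done, the fact that the two local isomorphisms glue to a global one, and that the result is an isomorphism, is immediate, being an isomorphism on each member of an open cover and compatible on overlaps (both $\F$ and $V$ being separated). A more intrinsic alternative is to build the morphism $\F\to\mathbb{P}^2$ directly from the tautological surjection on $\mathbb{P}(\mathcal{O}_{\mathbb{P}^1}\oplus\mathcal{O}_{\mathbb{P}^1}(-2))$ together with the basis $\{u^2,uv,v^2\}$ of $H^0(\mathbb{P}^1,\mathcal{O}_{\mathbb{P}^1}(2))$, but the chart computation above is the most economical.
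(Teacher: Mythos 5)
Your proof is correct and follows essentially the same route as the paper: both present $\mathbb{P}(\mathcal{O}_{\mathbb{P}^1}(2)\oplus\mathcal{O}_{\mathbb{P}^1})$ and the quadric hypersurface as $\mathbb{P}^1$-bundles trivialized over the two standard affine charts of $\mathbb{P}^1$, match the transition data, and glue the explicit chart maps (your formulas $([x:y],[u:v])\mapsto([xu^2:yu^2:yv^2],[u:v])$, etc., are exactly the paper's $\rho_1,\rho_2$ in homogeneous form). Your extra care with the clutching-function conventions is a harmless elaboration of the same argument.
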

\begin{proof}
Let $\sigma$: $\mathbb{P}(\mathcal{O}_{\mathbb{P}^1}(2)\oplus \mathcal{O}_{\mathbb{P}^1})\rightarrow \mathbb{P}^1$ be the canonical projection of the projectivization $\mathbb{P}(\mathcal{O}_{\mathbb{P}^1}(2)\oplus \mathcal{O}_{\mathbb{P}^1})$, let $U=\Sp(\mathbb{C}[v])$ and $U'=\Sp(\mathbb{C}[v'])$ such that $v'v=1$ on $U\cap U'$. Then $\mathbb{P}(\mathcal{O}_{\mathbb{P}^1}(2)\oplus \mathcal{O}_{\mathbb{P}^1})$ has the following presentation
$$\mathbb{P}(\mathcal{O}_{\mathbb{P}^1}(2)\oplus \mathcal{O}_{\mathbb{P}^1})=\sigma^{-1}(U)\cup \sigma^{-1}(U')=(U\times \mathbb{P}^1) \cup (U'\times \mathbb{P}^1).$$ such that on the intersection of the  affine open sets
$V=\Sp(\mathbb{C}[v,y])\subset U\times \mathbb{P}^1$ and $V'=\Sp(\mathbb{C}[v',y'])\subset U\times \mathbb{P}^1$, we have 
$$\begin{cases}
vv'=1 &\\ 
 y'=yv^2&  \\ 
\end{cases}.$$
So, an open covering of $\F$ is given by the open embeddings
\begin{align*}
\rho_1 :U\times \mathbb{P}^1&\rightarrow \F\\
(v,[x:y])&\mapsto ([x:y:yv^2],[1:v])
\end{align*} 
and \begin{align*}
\rho_2 :U'\times \mathbb{P}^1&\rightarrow \F\\
(v',[x':y'])&\mapsto ([x':y'v'^2:y'],[v':1]),
\end{align*} which glue to give an isomorphism $\rho:\;\mathbb{P}(\mathcal{O}_{\mathbb{P}^1}(2)\oplus \mathcal{O}_{\mathbb{P}^1})\rightarrow \F$.
\end{proof}

Now, the algebraic group $H$ acts on the second Hirzebruch surface  $$\F = \{ ([x:y:z],[u:v]) \in \mathbb{P}^2 \times \mathbb{P}^1 | yv^2=zu^2\} $$ in the following manner: for $p=([x:y:z],[u:v]) \in \F$ and $g=\left((a_0,a_1,a_2)^t, \begin{pmatrix}
a &b \\c 
 &d 
\end{pmatrix}\right) \in H$, 
$$g.p=\begin{cases}
\left (\left [xu^2+y(a_0v^2+a_1uv+a_2u^2):y(au+bv)^2:y(cu+dv)^2  \right ] ,\left [au+bv:cu+dv  \right ]  \right )& \text{ if } u \not= 0 \\ 
 \left (\left [xv^2+z(a_0v^2+a_1uv+a_2u^2):z(au+bv)^2:z(cu+dv)^2  \right ] ,\left [au+bv:cu+dv  \right ]  \right )& \text{ if } v\not=0 
\end{cases}.$$
The following theorem is well-known (see [1, Section 6.1]).
\begin{thm} The group of automorphisms of $\F$ is exactly the quotient of $H$ by the subgroup $I$ consisting of diagonal matrices of the form $ \begin{pmatrix}
\mu & 0\\ 
0 & \mu
\end{pmatrix}$  where $\mu \in \mathbb{C}$ such that $\mu^2=1$.
\end{thm}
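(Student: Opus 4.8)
The plan is to exhibit an explicit isomorphism between $H/I$ and $\Aut(\F)$, where $H = \mathbb{C}^3 \rtimes \text{GL}(2,\mathbb{C})$ and $I \cong \mu_2$ is the subgroup described. First I would verify that the formula for the $H$-action on $\F$ given just above the theorem is well-defined: one checks that the two cases (for $u \neq 0$ and $v \neq 0$) agree on the overlap $uv \neq 0$ using the defining relation $yv^2 = zu^2$, that the image point again satisfies this relation, and that the assignment is compatible with the projective rescalings of $[x:y:z]$ and $[u:v]$. One also checks the cocycle/action identity $g.(h.p) = (gh).p$, which amounts to matching the semidirect product multiplication in $H$ with composition of the substitutions. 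This gives a homomorphism $\varphi : H \to \Aut(\F)$.

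Next I would compute the kernel of $\varphi$. An element $g = ((a_0,a_1,a_2)^t, \left(\begin{smallmatrix} a & b \\ c & d\end{smallmatrix}\right))$ acts trivially iff it fixes every point. Looking at the action on the $\mathbb{P}^1$-factor $[u:v] \mapsto [au+bv : cu+dv]$, triviality there forces $\left(\begin{smallmatrix} a & b \\ c & d\end{smallmatrix}\right)$ to be scalar, say $a = d = \mu$, $b = c = 0$. Substituting back into the first coordinate and demanding that $[xu^2 + y(a_0 v^2 + a_1 uv + a_2 u^2) : \mu^2 y u^2 : \mu^2 y v^2]$ (resp. the $v\neq 0$ branch) equal $[x:y:z]$ for all points forces $a_0 = a_1 = a_2 = 0$ and $\mu^2 = 1$; hence $\ker\varphi = I$. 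So $\varphi$ descends to an injective homomorphism $\bar\varphi : H/I \hookrightarrow \Aut(\F)$.

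The remaining, and I expect main, step is surjectivity: every automorphism of $\F$ arises from $H$. Here I would invoke the structure of Hirzebruch surfaces. Since $\F = \mathbb{P}(\mathcal{O}_{\mathbb{P}^1}(2) \oplus \mathcal{O}_{\mathbb{P}^1})$ is a $\mathbb{P}^1$-bundle over $\mathbb{P}^1$ whose fibers are the unique pencil of rational curves with self-intersection $0$, every automorphism preserves this ruling and therefore induces an automorphism of the base $\mathbb{P}^1$ (giving the $\text{GL}(2,\mathbb{C})/\mathbb{C}^* = \text{PGL}(2,\mathbb{C})$ part, with the $\text{GL}$ lifting coming from acting on the bundle). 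An automorphism covering the identity on the base is a bundle automorphism, i.e. a section of the associated $\text{PGL}_2$-bundle; using $\mathbb{P}(\mathcal{O}(2)\oplus\mathcal{O}) \cong \mathbb{P}(\mathcal{O}(2)\oplus\mathcal{O})$ and $H^0(\mathbb{P}^1, \mathcal{O}(2)) $ being $3$-dimensional, together with the fact that $\text{Aut}$ of each fiber is $\text{PGL}_2$ but must preserve the section at infinity (the unique $(-2)$-curve) and the $\mathcal{O}(2)$-subbundle, one gets exactly the unipotent part $\mathbb{C}^3 = H^0(\mathbb{P}^1,\mathcal{O}(2))$ together with the remaining torus scaling. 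Matching dimensions, $\dim \Aut(\F) = \dim(\text{PGL}_2) + \dim(\mathbb{C}^3) + 1 = 3 + 3 + 1 = 7 = \dim(H/I)$, and since $\bar\varphi$ is an injective homomorphism of algebraic groups between groups of the same dimension with $\Aut(\F)^\circ$ connected, $\bar\varphi$ is an isomorphism onto the identity component; checking that $\Aut(\F)$ is itself connected (no extra components, e.g. because $\F$ has no automorphism swapping the two rulings) finishes the proof. The cited reference [1, Section 6.1] presumably supplies this structural input, so in practice I would cite it for surjectivity and connectedness and present the explicit $\varphi$ and its kernel computation in detail, as these are exactly what is needed for the vector-field bookkeeping later in the paper.
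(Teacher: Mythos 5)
The paper does not actually prove this theorem: it is stated as well-known and simply referred to [1, Section 6.1], so there is no internal argument to compare against. Your proposal is a reasonable and essentially correct reconstruction of the standard proof. The first two steps are sound and complete as outlined: the displayed action is checked to be well-defined (the two branches agree on $uv\neq 0$ via $yv^2=zu^2$, and the image again lies on $\F$) and to be a group action, and the kernel computation correctly forces the matrix part to be scalar from the action on the $\mathbb{P}^1$-factor and then forces $a_0=a_1=a_2=0$, $\mu^2=1$ from the first factor, giving $\ker\varphi=I$. The surjectivity step is where all the real content sits, and there your argument is only a sketch: you need (i) that the ruling $\F\to\mathbb{P}^1$ is the unique pencil of irreducible curves of self-intersection $0$ on $\F$ (true, since the only other numerical class of square zero, $s+f$, has negative intersection with the $(-2)$-section and so is never irreducible), hence every automorphism covers an element of $\mathrm{PGL}_2(\mathbb{C})$; (ii) that the fiber-preserving automorphisms are exactly $\Aut(\mathcal{O}\oplus\mathcal{O}(2))/\mathbb{C}^*$, a connected group of dimension $4$ (lower-triangular endomorphisms with a $H^0(\mathcal{O}_{\mathbb{P}^1}(2))$ off-diagonal entry); and (iii) that an injective homomorphism of algebraic groups between groups of the same dimension with connected target is surjective, which uses that the image of such a homomorphism is closed. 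You correctly identify these inputs and defer them to [1], which is consistent with what the paper itself does; so your proposal is, if anything, more detailed than the paper's treatment, and the explicit kernel computation you supply is exactly the part that matters for the Lie-algebra bookkeeping in Section 4.
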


\section{A formal semi-universal deformation of $\F$ and formal vector fields on it}
\subsection{Construction of the semi-universal deformation of $\F$} 
We shall follow the construction given in [4, Example 1.2.2.(iii)]. Consider two copies of $\mathbb{C}\times\mathbb{C}\times \mathbb{P}^1$ given by $W:=\pr(\mathbb{C}[t,v,x,y])$ and $W':=\pr(\mathbb{C}[t',v',x',y'])$ (note that these two rings are graded with respect to $x,y$ and $x',y'$, respectively). Consider the affine subsets $\Sp(\mathbb{C}[t,v,y])\subset W$, $\Sp(\mathbb{C}[t',v',y']) \subset W'$ and then glue them along the open subsets

$$\Sp(\mathbb{C}[t,v,v^{-1},y]) \subset \Sp(\mathbb{C}[t,v,y])$$ and $$\Sp(\mathbb{C}[t',v',v'^{-1},y']) \subset \Sp(\mathbb{C}[t',v',y'])$$ by the rules
\begin{equation}\begin{cases}
vv'=1 & \\ 
x'=x & \\
 y' =yv^2-tvx&  \\ 
t'=t.
\end{cases}
\end{equation}
This gives a gluing of $W$ and $W'$ along $$\pr(\mathbb{C}[t,v,v^{-1},x,y])\text{ and }\pr(\mathbb{C}[t',v',v'^{-1},x',y']).$$ We denote the resulting scheme by $\mathcal{W}$. In other words, if we let $(t,v,[x:y])$ and $(t',v', [x':y'])$ be the coordinates on $W=\mathbb{C}\times\mathbb{C}\times \mathbb{P}^1$ and on $W'=\mathbb{C}\times\mathbb{C}\times \mathbb{P}^1$, respectively. Then $W$ is obtained by glue $W$ and $W'$ according to the rules $(3.1)$. Now, let $\pi:$ $\mathcal{W}\rightarrow \mathbb{C}$ be the morphism induced by the projections.
\begin{thm} The familly 
$\pi:$ $\mathcal{W}\rightarrow \mathbb{C}=\Sp(\mathbb{C}[t])$ is a semi-universal deformation of $\F$. Moreover,
$$\pi^{-1}(t)=\begin{cases}
 & \F\text{ if } t=0 \\ 
 & \mathbb{P}^1 \times \mathbb{P}^1 \text{ otherwise}.
\end{cases}$$
\end{thm}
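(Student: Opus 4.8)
The plan is to check three things for $\pi\colon\mathcal{W}\to\mathbb{C}$: that it is flat, that its fibres are as claimed, and that the formal deformation it induces is semi-universal. Flatness is immediate, since in each of the two building charts $\Sp(\mathbb{C}[t,v,y])$ and $\Sp(\mathbb{C}[t',v',y'])$ the morphism $\pi$ is the one attached to the free ring extension $\mathbb{C}[t]\hookrightarrow\mathbb{C}[t,v,y]$, and flatness may be checked locally on the source. Putting $t=0$ in the gluing $(3.1)$ leaves $vv'=1$, $x'=x$, $y'=yv^2$, which is exactly the presentation of $\F$ from Proposition 2.1; hence $\pi^{-1}(0)\cong\F$. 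For a fixed $t\ne 0$, $\pi^{-1}(t)$ is the surface glued from $\mathbb{A}^1_v\times\mathbb{P}^1$ and $\mathbb{A}^1_{v'}\times\mathbb{P}^1$ along $v\ne0$ by $v'=1/v$, $x'=x$, $y'=v^2y-tvx$. I would now pass to the new fibre coordinates $[x:y]\mapsto[\,tx-vy:ty\,]$ on the first chart and $[x':y']\mapsto[\,{-}y':t^2x'+tv'y'\,]$ on the second: each is polynomial in the base coordinate and, because $t\ne0$, is a fibrewise automorphism of $\mathbb{P}^1$ for every value of $v$ (resp.\ $v'$), and a short substitution using $(3.1)$ shows that in these coordinates the gluing becomes simply $v'=1/v$ together with the identity on fibres, i.e.\ the standard presentation of $\mathbb{P}^1\times\mathbb{P}^1$. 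Equivalently, $\pi^{-1}(t)$ is the ruled surface $\mathbb{P}(E_t)$ over $\mathbb{P}^1$ with $E_t$ determined by $(3.1)$, and one checks via Grothendieck's splitting theorem that $E_t$ has balanced splitting type exactly when $t\ne 0$ and unbalanced type for $t=0$, reproducing $\mathbb{P}^1\times\mathbb{P}^1$ and $\F$ respectively.

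For semi-universality I would invoke the standard criterion. The relative tangent sequence $0\to\mathcal{T}_{\F/\mathbb{P}^1}\to\mathcal{T}_\F\to p^*\mathcal{T}_{\mathbb{P}^1}\to0$ of the ruling $p\colon\F\to\mathbb{P}^1$, pushed forward to $\mathbb{P}^1$ — where the higher direct images vanish and where $\mathbb{P}^1$ is a curve — yields $h^1(\F,\mathcal{T}_\F)=1$ and $h^2(\F,\mathcal{T}_\F)=0$, consistent with $h^0(\F,\mathcal{T}_\F)=7$ recorded earlier; this computation is the one carried out in [4, Example 1.2.2.(iii)], from which semi-universality may also simply be quoted. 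Since $H^2(\F,\mathcal{T}_\F)=0$, the deformation functor of $\F$ is unobstructed and its formal semi-universal deformation has smooth one-dimensional base, namely $\spf(\mathbb{C}[[t]])$; it therefore suffices to prove that the Kodaira--Spencer map $T_0\mathbb{C}\to H^1(\F,\mathcal{T}_\F)$ of $\pi$ is an isomorphism, and since both sides are one-dimensional I only need it to be nonzero. Differentiating $(3.1)$ at $t=0$ gives, relative to the cover $\{\pi^{-1}(0)\cap W,\ \pi^{-1}(0)\cap W'\}$, the vertical \v{C}ech $1$-cocycle whose value on the overlap is $-v^{-1}\,\partial_{y/x}$; comparing Laurent expansions in $v$ shows that its $v^{-1}$-term cannot be produced by vector fields regular on either chart, so the cocycle is not a coboundary and the Kodaira--Spencer map is nonzero, hence bijective. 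Because $H^2(\F,\mathcal{T}_\F)=0$, a flat deformation over a smooth base with surjective Kodaira--Spencer map is versal, so bijectivity of Kodaira--Spencer here makes $\pi$ semi-universal; passing to the formal deformation associated to $\pi$ (Definition 1.5) completes the argument.

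The step I expect to be hardest is the identification of the general fibre with $\mathbb{P}^1\times\mathbb{P}^1$: one has to produce the base-regular change of fibre coordinates above (or, in the bundle language, determine the splitting type of $E_t$ while keeping track of the ambiguity $\mathbb{P}(E)=\mathbb{P}(E\otimes L)$), and this is precisely the place where $t\ne 0$ is used. A second, more delicate point is that the non-vanishing of the Kodaira--Spencer class really has to be computed by hand: it cannot be inferred from the fibres, since, for example, the family $y'=yv^2-t^2vx$ has the very same fibres yet its first-order Kodaira--Spencer class vanishes. The remaining ingredients — flatness, the cohomology dimensions of $\mathcal{T}_\F$, and the passage from unobstructedness plus a bijective Kodaira--Spencer map to semi-universality — are routine.
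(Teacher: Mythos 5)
Your proof is correct and follows essentially the same route as the paper: flatness checked chart by chart, the special fibre read off from $(3.1)$ at $t=0$, the generic fibre identified with $\mathbb{P}^1\times\mathbb{P}^1$ by an explicit fibrewise linear change of coordinates (yours agrees with the paper's maps $\phi,\phi'$ up to composing with a fixed automorphism of $\mathbb{P}^1$), and semi-universality via bijectivity of the one-dimensional Kodaira--Spencer map. The only difference is that you supply in-line the facts the paper quotes from Kodaira, namely $\dim H^1(\F,\mathcal{T}_\F)=1$ and the Laurent-expansion check that the \v{C}ech cocycle $-v^{-1}\partial_y$ is not a coboundary.
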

\begin{proof}
The map $\pi$ is obviously surjective by construction. Since $\pi$ is locally a projection, it is a flat morphism. Moreover, by Proposition 2.1, $\mathcal{W}_0=\pi^{-1}(0)=\F.$ Then $\pi:$ $\mathcal{W}\rightarrow \mathbb{C}$ is a deformation of $\F$. Next, let $\mathcal{W}^*=\pi^{-1}(\mathbb{C}^*)$ and $\pi^*:\;\mathcal{W}^* \rightarrow \mathbb{C}^* $ is the restriction of $\pi$ on $\mathbb{C}^*=\Sp(\mathbb{C}[t,t^{-1}])$. We shall prove that $\mathcal{W}^*$ is in fact isomorphic to $\mathbb{C}^*\times \mathbb{P}^1\times \mathbb{P}^1$. Indeed, consider the following open embeddings 
\begin{align*}
\phi :\mathbb{C}^*\times \mathbb{C}\times \mathbb{P}^1&\rightarrow    \mathbb{C}^*\times \mathbb{P}^1\times \mathbb{P}^1\\
(t,v,[x:y])&\mapsto (t,[1:v],[ty:vy-tx])
\end{align*} 
and 

\begin{align*}
\phi':\mathbb{C}^*\times \mathbb{C}\times \mathbb{P}^1&\rightarrow\mathbb{C}^*\times \mathbb{P}^1\times \mathbb{P}^1\\
(t',v',[x':y'])&\mapsto (t',[v':1],[t'v'y'+t'^2x':y']).
\end{align*} 
By the gluing condition $(3.1)$, we have that
\begin{align*}
(t',[v':1],[t'v'y'+t'^2x':y']) &=(t',[v':1],[t'v'(yv^2-tvx)+t'^2x':yv^2-tvx]) \\ 
 &= (t',[v':1],[t'yv:yv^2-tvx]) \\ 
 &= (t,[1:v],[ty:yv-tx]).
\end{align*}
Hence, the above two morphisms glue to give an isomorphism \begin{center}
\begin{tikzpicture}[every node/.style={midway}]
  \matrix[column sep={8em,between origins}, row sep={3em}] at (0,0) {
    \node(Y){$\mathcal{W}^*$} ; & \node(X) {$\mathbb{C}^*\times \mathbb{P}^1 \times \mathbb{P}^1$}; \\
    \node(M) { }; & \node (N) {$\mathbb{C}^*$,};\\
  };
  \draw[->] (Y) -- (X) node[anchor=south]  {$\cong$};
  \draw[->] (X) -- (N) node[anchor=west] {$\proj_1$};
  \draw[->] (Y) -- (N) node[anchor=north] {$\pi^*$};
\end{tikzpicture}
\end{center}
which means precisely that $\pi^*:\;\mathcal{W}^* \rightarrow \mathbb{C}^* $ is the trivial family whose fibers are all isomorphic to $\mathbb{P}_1\times \mathbb{P}_1$. In particular, for $t\in \mathbb{C}^*$, $\pi^{-1}(t)=(\pi^*)^{-1}(t)=\mathbb{P}_1\times \mathbb{P}_1 $.

It remains to prove that the family $\pi:$ $\mathcal{W}\rightarrow \mathbb{C}$ is actually semi-universal.  One way to see it is to compute the Kodaira-Spencer map $\mathcal{K}_{\pi,0}$ of $\pi$ at $0$. This map is uniquely determined by the element $\mathcal{K}_{\pi,0}(\frac{d}{dt})$ in $H^1(\F,\mathcal{T}_{\F})$. By definition, $\mathcal{K}_{\pi,0}(\frac{d}{dt})$ represents the first order deformation of $\F$, obtained by gluing $W_0:=\pr(\mathbb{C}[\epsilon,v,x,y])$ and  $W_0':=\pr(\mathbb{C}[\epsilon,v',x',y'])$ along  $\pr(\mathbb{C}[\epsilon,v,v^{-1},x,y])$ and  $\pr(\mathbb{C}[\epsilon,v',v'^{-1},x',y'])$ by the rules 
$$\begin{cases}
vv'=1 & \\ 
 y' =yv^2-\epsilon v 
\end{cases},$$  where $\mathbb{C}[\epsilon]$ is the ring of complex dual numbers. Hence, $\mathcal{K}_{\pi,0}(\frac{d}{dt}) \in H^1(\mathcal{U},\mathcal{T}_{\F} )$ is the $1$-cocycle which corresponds to the vector field $\lbrace -v\frac{\partial}{\partial y} \rbrace$ on  $W_0 \cap W_0'$, where $\mathcal{U}$ is the covering $\lbrace W_0,W_0'\rbrace$. By [3, Example B.11(iii)], we see that $\lbrace -v\frac{\partial}{\partial y} \rbrace$ is nonzero and $\dim_{\mathbb{C}}H^1(\F,\mathcal{T}_{\F})=1$. Thus, the Kodaira-Spencer map is an isomorphism and so $\pi:$ $\mathcal{W}\rightarrow \mathbb{C}$ is semi-universal.
\end{proof}
Another useful presentation of $\mathcal{W}$ is given as follows.

\begin{prop} The scheme $\mathcal{W}$ is isomorphic to the surface
$$\mathcal{X}:=\{ ([x:y:z],[u:v],t)  \in \mathbb{P}^2 \times \mathbb{P}^1 \times \mathbb{C} \text{ }|\text{ } yv^2-zu^2 -txuv=0\} .
$$
\end{prop}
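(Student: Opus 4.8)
The plan is to construct an explicit isomorphism between the glued scheme $\mathcal{W}$ and the hypersurface $\mathcal{X}$ by combining the two charts used in the construction of $\mathcal{W}$, in exactly the same spirit as the proof of Proposition~2.1. First I would define, on the first chart $W = \pr(\mathbb{C}[t,v,x,y])$, a morphism
\begin{align*}
\psi_1 : W = \mathbb{C}\times\mathbb{C}\times\mathbb{P}^1 &\rightarrow \mathcal{X}\\
(t,v,[x:y]) &\mapsto \bigl([x : y : yv^2 - tvx],\, [1:v],\, t\bigr),
\end{align*}
and on the second chart $W' = \pr(\mathbb{C}[t',v',x',y'])$ a morphism
\begin{align*}
\psi_2 : W' = \mathbb{C}\times\mathbb{C}\times\mathbb{P}^1 &\rightarrow \mathcal{X}\\
(t',v',[x':y']) &\mapsto \bigl([x' : y'v'^2 - t'v'x' : y'],\, [v':1],\, t'\bigr).
\end{align*}
I would first check that these land in $\mathcal{X}$: for $\psi_1$, with $u=1$, $v=v$, one has $yv^2 - zu^2 - txuv = yv^2 - (yv^2 - tvx) - tvx = 0$; similarly for $\psi_2$ with the roles of $u,v$ swapped. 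One also checks each $\psi_i$ is an open embedding onto the locus where the relevant projective coordinate ($u$ for $\psi_1$, $v$ for $\psi_2$) is nonzero, the inverse being read off directly from the formulas.

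Next I would verify that $\psi_1$ and $\psi_2$ agree on the overlap where the gluing $(3.1)$ is performed, i.e. that under the substitution $vv'=1$, $x'=x$, $y'=yv^2 - tvx$, $t'=t$, the two images coincide as points of $\mathcal{X}$. This is a short computation: $\psi_2(t',v',[x':y'])$ has first coordinate $[x' : y'v'^2 - t'v'x' : y']$; substituting $x'=x$, $y' = yv^2 - tvx$, $v' = 1/v$, $t'=t$ gives $\bigl[x : (yv^2 - tvx)v^{-2} - tv^{-1}x : yv^2 - tvx\bigr]$, and rescaling the homogeneous triple by $v^2$ yields $\bigl[xv^2 : yv^2 - tvx - tvx\cdot(\text{correction}) : (yv^2 - tvx)v^2\bigr]$ — here I would be careful with the arithmetic, but the point is that after clearing denominators the triple becomes proportional to $[x : y : yv^2 - tvx]$ together with $[v':1] = [1:v]$ and $t'=t$, which is exactly $\psi_1(t,v,[x:y])$. (The algebra is the same bookkeeping as in the displayed chain of equalities in the proof of Theorem~3.1, and I would present it as a two- or three-line display.) Hence $\psi_1$ and $\psi_2$ glue to a morphism $\psi : \mathcal{W} \to \mathcal{X}$.

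Finally I would argue $\psi$ is an isomorphism. Since $\mathcal{X}$ is covered by the two opens $\{u \neq 0\}$ and $\{v \neq 0\}$, and $\psi$ restricts to the open embeddings $\psi_1, \psi_2$ onto these two opens respectively, it suffices to note that the images cover $\mathcal{X}$ and that the inverses agree on the overlap (which follows from the gluing check just done, run backwards). Alternatively, and more cleanly, I can observe that $\psi$ is a morphism over $\mathbb{C}$ which is an isomorphism fibrewise: for $t\neq 0$ both sides are identified with $\mathbb{C}^*\times\mathbb{P}^1\times\mathbb{P}^1$ compatibly (via the formulas of Theorem~3.1 and a direct parametrization of the smooth quadric $\mathcal{X}_t$), and for $t=0$ both sides are $\F$ by Proposition~2.1 and the defining equation of $\mathcal{X}$ at $t=0$ is precisely $yv^2 = zu^2$; combined with flatness this forces $\psi$ to be an isomorphism. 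I expect the only genuinely fiddly part to be the overlap computation, specifically keeping track of the homogeneous rescaling factor $v^2$ in the $\mathbb{P}^2$-coordinate when passing through the gluing relation $y' = yv^2 - tvx$; everything else is formal.
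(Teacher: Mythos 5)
Your approach is exactly the paper's: exhibit open embeddings of the two charts $W$ and $W'$ into $\mathcal{X}$ and check compatibility with the gluing $(3.1)$. Your $\psi_1$ is correct, but $\psi_2$ has a sign error: at the point you write down, $yv^2-zu^2-txuv=-2t'v'x'\neq 0$, so the map does not land in $\mathcal{X}$, and the overlap computation then yields $[x:y-2tx/v:yv^2-tvx]$ instead of $[x:y:yv^2-tvx]$. The correct second chart map is $(t',v',[x':y'])\mapsto([x':y'v'^2+t'v'x':y'],[v':1],t')$, after which the middle coordinate becomes $(yv^2-tvx)v^{-2}+txv^{-1}=y$ and the gluing check closes exactly as you describe; everything else in your argument is fine. (Incidentally, the paper's own displayed formulas for $\rho_1$ and $\rho_2$ also contain typos --- the factors of $x$ and $x'$ are missing from the third, resp.\ second, homogeneous coordinates.)
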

\begin{proof}
We have an open covering of $\mathcal{X}$ given by the open embeddings
\begin{align*}
\rho_1 :\mathbb{C}\times \mathbb{C}\times \mathbb{P}^1&\rightarrow \mathcal{X}\\
(t,v,[x:y])&\mapsto ([x:y:yv^2-tv],[1:v],t)
\end{align*} 
and \begin{align*}
\rho_2 :\mathbb{C}\times \mathbb{C}\times \mathbb{P}^1&\rightarrow \mathcal{X}\\
(t',v',[x':y'])&\mapsto ([x':y'v'^2+t'v':y'],[v':1],t)
\end{align*} which glue to give an isomorphism $\mathcal{W}\overset{\cong}{\rightarrow} \mathcal{X}$.
\end{proof}
\begin{rem} By Proposition 2.1 and by Proposition 3.1, from now on, we use interchangeably between $\F, \mathcal{X}$ and 
$\mathbb{P}(\mathcal{O}_{\mathbb{P}^1}(2)\oplus \mathcal{O}_{\mathbb{P}^1}),\mathcal{W}$, respectively.
\end{rem}

\subsection{Formal vector fields on the formal semi-universal deformation of $\F$}
The formal deformation associated to $\mathcal{W}$, $\widehat{\pi}:\widehat{\mathcal{W}} \rightarrow \spf(\mathbb{C}[[t]])$ is a formal semi-universal deformation of $\F$ (here $\mathbb{C}[[t]]$ is the ring of formal power series in the variable $t$). We will give explicit descriptions of formal vector fields on  $\widehat{\mathcal{W}}$. Consider the covering $\{W,W'\}$ where $W:=\pr(\mathbb{C}[t,v,x,y])$ and $W':=\pr(\mathbb{C}[t',v',x',y'])$, as before. A formal vector field on $W$ is of the form
\begin{equation}
 g_1(v,t)\frac{\partial }{\partial v}+(\alpha_1(v,t)y^2+\beta_1(v,t)y+\gamma_1(v,t))\frac{\partial }{\partial y}+k_1(t)\frac{\partial }{\partial t}\end{equation} where $g_1,\alpha_1,\beta_1,\gamma_1,k_1$ are formal power series in the variable $t$. Likewise, a formal vector field on $W'$ is of the form
 
\begin{equation}
 g_2(v',t')\frac{\partial }{\partial v'}+(\alpha_2(v',t')y'^2+\beta_2(v',t')y'+\gamma_2(v',t'))\frac{\partial }{\partial y'}+k_2(t')\frac{\partial }{\partial t'}
\end{equation} where $g_2,\alpha_2,\beta_2,\gamma_2,k_2$ are formal power series in the variable $t'$. Therefore, a vector field on $\mathcal{W}$ which is of the form $(3.2)$ on $W $ and of the form $(3.3)$ on $W' $ must satisfy the relation 

\begin{equation}
\begin{matrix}
g_1(v,t)\frac{\partial }{\partial v}+(\alpha_1(v,t)y^2+\beta_1(v,t)y+\gamma_1(v,t))\frac{\partial }{\partial y}+k_1(t)\frac{\partial }{\partial t} \text{ } \text{ } \text{ } \text{ } \text{ } \text{ } \text{ } \text{ } \text{ } \text{ } \text{ } \text{ } \text{ } \text{ } \text{ } \text{ } \text{ } \text{ } \text{ } \text{ } \text{ } \text{ } \text{ } \\ 
\text{ } \text{ } \text{ } \text{ } \text{ }\text{ } \text{ }=g_2(v',t')\frac{\partial }{\partial v'}+(\alpha_2(v',t')y'^2+\beta_2(v',t')y'+\gamma_2(v',t'))\frac{\partial }{\partial y'}+k_2(t')\frac{\partial }{\partial t'}
\end{matrix}
\end{equation}
on the overlapping open set  $W \cap W'$.

\begin{lem}
A global formal vector field on  $\widehat{\mathcal{W}}$ whose restriction on $W$  is $$g_1(v,t)\frac{\partial }{\partial v}+(\alpha_1(v,t)y^2+\beta_1(v,t)y+\gamma_1(v,t))\frac{\partial }{\partial y}+k_1(t)\frac{\partial }{\partial t}$$ must satisfy the following \begin{equation}
\begin{cases}
 g_1(v,t)=A(t)v^2+B(t)v+C(t)\\ 
 \alpha_1(v,t)=a(t)v^2+b(t)v+c(t)\\ 
 \beta_1(v,t)=-2[a(t)t+A(t)]v+e(t)\\ 
\gamma_1(v,t)=t^2a(t)+tA(t)
\end{cases}
\end{equation} where $A,B,C,a,b,c,e, k_1$ are formal power series in the variable $t$ with a relation 
\begin{equation}
b(t)t^2+e(t)t+B(t)t-k_1(t)=0. 
\end{equation}
\end{lem}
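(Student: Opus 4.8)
My plan is to prove the statement by carrying out the coordinate change $(3.1)$ explicitly on the overlap $W\cap W'$ and then imposing regularity on the $W'$-side: a vector field regular on $W$ extends to a global formal vector field on $\widehat{\mathcal{W}}$ precisely when, written in the $W'$-coordinates, it is again regular, and I will show that this regularity is equivalent to $(3.5)$--$(3.6)$. On the affine charts the gluing $(3.1)$ reads $v'=1/v$, $y'=yv^2-tv$, $t'=t$, with inverse $v=1/v'$, $y=v'^2y'+v't$, $t=t'$, exactly as recorded in Proposition 3.1. Differentiating these relations and using the chain rule gives
\begin{align*}
\frac{\partial}{\partial v}&=-v'^2\frac{\partial}{\partial v'}+(2yv-t)\frac{\partial}{\partial y'},\\
\frac{\partial}{\partial y}&=v^2\frac{\partial}{\partial y'},\\
\frac{\partial}{\partial t}&=-v\frac{\partial}{\partial y'}+\frac{\partial}{\partial t'}.
\end{align*}
Substituting this into $(3.2)$ and collecting terms rewrites the given vector field on $W\cap W'$ in the coordinates $(v',y',t')$; since $y$ is affine-linear in $y'$, the resulting $\partial/\partial y'$-component is automatically a polynomial of degree $\le 2$ in $y'$. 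Comparing with the shape $(3.3)$ required on $W'$ therefore amounts to the four conditions $k_2(t)=k_1(t)$ (automatic, as $k_1$ depends on $t$ only), $g_2(v',t)=-v'^2\,g_1(1/v',t)$, and the equalities of the $y'^2$-, $y'^1$- and $y'^0$-coefficients of the $\partial/\partial y'$-part, which come out as
\begin{align*}
\alpha_2(v',t)&=v'^2\,\alpha_1(1/v',t),\\
\beta_2(v',t)&=v'\big(2g_1(1/v',t)+2t\,\alpha_1(1/v',t)\big)+\beta_1(1/v',t),\\
\gamma_2(v',t)&=t\,g_1(1/v',t)+t^2\,\alpha_1(1/v',t)+t\,v'^{-1}\beta_1(1/v',t)+v'^{-2}\gamma_1(1/v',t)-k_1(t)\,v'^{-1}.
\end{align*}

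I then impose on $g_2,\alpha_2,\beta_2,\gamma_2$ the only requirement they have to meet, namely regularity at $v'=0$ (the chart $W'$ contains the locus $v'=0$, which is disjoint from $W$), and extract $(3.5)$--$(3.6)$ line by line. From $g_2(v',t)=-v'^2g_1(1/v',t)$, regularity forces $g_1$ to have degree $\le 2$ in $v$, i.e. $g_1(v,t)=A(t)v^2+B(t)v+C(t)$; likewise $\alpha_2(v',t)=v'^2\alpha_1(1/v',t)$ forces $\alpha_1(v,t)=a(t)v^2+b(t)v+c(t)$. Writing $\beta_1(v,t)=\sum_{i\ge0}\beta^{(i)}(t)v^i$ and inserting these two forms, the polar part of $\beta_2$ at $v'=0$ becomes $\big(2A(t)+2ta(t)+\beta^{(1)}(t)\big)v'^{-1}+\sum_{i\ge2}\beta^{(i)}(t)v'^{-i}$, so regularity forces $\beta^{(i)}=0$ for $i\ge2$ and $\beta^{(1)}(t)=-2\big(a(t)t+A(t)\big)$, that is $\beta_1(v,t)=-2[a(t)t+A(t)]v+e(t)$ with $e:=\beta^{(0)}$. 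Finally, writing $\gamma_1(v,t)=\sum_{i\ge0}\gamma^{(i)}(t)v^i$ and substituting everything already found into the formula for $\gamma_2$, the vanishing of the coefficients of $v'^{-2}$, of $v'^{-1}$, and of $v'^{-j}$ for $j\ge3$ yields respectively $\gamma^{(0)}(t)=t^2a(t)+tA(t)$, the relation $b(t)t^2+e(t)t+B(t)t-k_1(t)=0$, and $\gamma^{(i)}=0$ for all $i\ge1$; hence $\gamma_1(v,t)=t^2a(t)+tA(t)$. This is exactly $(3.5)$--$(3.6)$; conversely, one checks directly that if $(3.5)$--$(3.6)$ hold then $g_2,\alpha_2,\beta_2,\gamma_2$ are regular, so such a global formal vector field indeed exists.

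The only real difficulty is organizational, and it is concentrated in this last step: after the substitutions $v=1/v'$ and $y=v'^2y'+v't$, several distinct terms contribute to each negative power of $v'$ in the $y'^0$-coefficient, and the closed form of $\gamma_1$ together with the single relation $(3.6)$ emerge precisely from the simultaneous cancellation of the $v'^{-2}$- and $v'^{-1}$-parts. Once the $\partial/\partial y'$-component is organized first by powers of $y'$ and then by powers of $v'$, the remaining verifications are routine.
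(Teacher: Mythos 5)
Your proposal is correct and follows essentially the same route as the paper: substitute the transition relations $(3.1)$ into the vector field on $W\cap W'$, read off $g_2,\alpha_2,\beta_2,\gamma_2$ as in $(3.8)$, and impose regularity at $v'=0$ to force $(3.5)$ and extract $(3.6)$ from the $v'^{-1}$-coefficient of $\gamma_2$. You merely spell out the pole-cancellation bookkeeping (and the converse) in more detail than the paper does.
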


\begin{proof}
 By $(3.1)$, we have
\begin{equation}\begin{cases}
y=v'^2y'+tv'\\ 
v=\frac{1}{v'}\\ t=t'\\
\partial_v=-v'^2\partial_{v'}+(2y'v'+t)\partial_{y'}\\ \partial_{y}=\frac{1}{v'^2}\partial_{y'}\\\partial_{t'}=-\frac{1}{v'}\partial_{y'}+\partial_{t'}.
\end{cases}\end{equation}
Substituting  $(3.7)$ into the left hand side of $(3.4)$ and equalizing, we get that
\begin{equation}
\begin{cases}
 g_2(v',t')=-v'^2g_1(\frac{1}{v'},t')\\ 
 \alpha_2(v',t')=v'^2\alpha_1(\frac{1}{v'},t')\\ 
 \beta_2(v',t')=2t'v'\alpha_1(\frac{1}{v'},t')+\beta_1(\frac{1}{v'},t')+2v'g_1(\frac{1}{v'},t')\\ 
\gamma_2(v',t')=t'^2\alpha_1(\frac{1}{v'},t')+\frac{t'}{v'}\beta_1(\frac{1}{v'},t')+\frac{1}{v'^2}\gamma_1(v',t')+t'g_1(\frac{1}{v'},t')-\frac{k_1(t')}{v'},
\end{cases}
\end{equation}
which implies that
$$
\begin{cases}
 g_1(v,t)=A(t)v^2+B(t)v+C(t)\\ 
 \alpha_1(v,t)=a(t)v^2+b(t)v+c(t)\\ 
 \beta_1(v,t)=-2[a(t)t+A(t)]v+e(t)\\ 
\gamma_1(v,t)=t^2a(t)+tA(t),
\end{cases}
$$ where $A,B,C,a,b,c,e$ are formal power series in the variable $t$ with a relation 
$$
b(t)t^2+e(t)t+B(t)t-k_1(t)=0. $$
This constraint comes from the coefficient of $\frac{1}{v'}$ in the fourth equation in $(3.8)$.
\end{proof}
\begin{rem} If $t=0$ then $(3.5)$ becomes 
$$\begin{cases}
 g_1(v)=Av^2+Bv+C\\ 
 \alpha_1(v)=av^2+bv+c\\ 
 \beta_1(v)=-2Av+e\\ 
\gamma_1(v,t)=0
\end{cases}$$ which agrees with Kodaira's calculations of vector fields on $\mathcal{W}_0=\F$ (see [3, Page 75]). In particular, we have seven linearly independent vector fields on $\F$. If $t$ is non-zero and fixed then we have six linearly independent vector fields on the fiber $\mathcal{W}_t$, which is due to the existence of the relation $(3.6)$.

\end{rem}

\section{The non-existence of $G$-equivariant structure on the formal semi-universal deformation} 
The Lie algebra of $G:= \text{Aut}(\F)$ is $\mathbb{C}^3\times M(2,\mathbb{C})$, which is evidently $7$-dimensional . A $\mathbb{C}$-basis of $\text{Lie(G)}$ is given by the following elements
$$\begin{cases}
e_1=(1,0,0)\times \begin{pmatrix}
0 &0 \\ 
0 &0 
\end{pmatrix},\text{ }e_2=(0,0,1)\times \begin{pmatrix}
0 &0 \\ 
0 &0 
\end{pmatrix},\text{ }e_3=(0,0,0)\times \begin{pmatrix}
0 &0 \\ 
1 &0 
\end{pmatrix}, &  \\ 
e_4=(0,1,0)\times \begin{pmatrix}
0 &0 \\ 
0 &0 
\end{pmatrix},\text{ }e_5=(0,0,0)\times \begin{pmatrix}
1 &0 \\ 
0 &0 
\end{pmatrix},\text{ }e_6=(0,0,0)\times \begin{pmatrix}
0 &0 \\ 
0 &1 
\end{pmatrix},&  \\ 
e_7=(0,0,0)\times \begin{pmatrix}
0 &1 \\ 
0 &0
\end{pmatrix}.
\end{cases}$$
Then the $G$-action gives us $7$ vector fields $E_1',\ldots,E_7'$ on $\F$ with the relations 

\begin{align*}
 &\begin{cases}
[E_1',E_2']=0 \\ 
 [E_1',E_3']=-2E_4' \\ 
[E_1',E_4']=0  \\ 
[E_1',E_5']=0  \\ 
[E_1',E_6']=-2E_1'\\
[E_1',E_7']=0,  
\end{cases}
\begin{cases}
[E_2',E_3']=0  \\ 
[E_2',E_4']=0  \\ 
[E_2',E_5']=-2E_2'  \\ 
[E_2',E_6']=0 \\
[E_2',E_7']=-2E_4', 
\end{cases}\begin{cases}
[E_3',E_4']=E_2'  \\ 
[E_3',E_5']=-E_3'  \\ 
[E_3',E_6']=E_3'\\ 
[E_3',E_7']=E_5'-E_6' ,
\end{cases}\\ 
 & \begin{cases}
[E_4',E_5']=-E_4' \\ 
[E_4',E_6']=-E_4' \\ 
[E_4',E_7']=-E_1',
\end{cases}\text{ }\begin{cases}
[E_5',E_6']=0 \\ 
[E_5',E_7']=-E_7', 
\end{cases}\text{ }\text{ }[E_6',E_7']=E_7'.
\end{align*}

Now, we are in the position to prove the main result of this paper. Suppose that the $G$-action extends on  $\widehat{\mathcal{W}}$ . This implies that we also have $7$ formal vector fields $E_1,E_2,E_3,E_4,E_5,E_6,E_7$ on  $\widehat{\mathcal{W}}$ with the following Lie bracket constraints

\begin{align*}
 &\begin{cases}
[E_1,E_2]=0 \\ 
 [E_1,E_3]=-2E_4 \\ 
[E_1,E_4]=0  \\ 
[E_1,E_5]=0  \\ 
[E_1,E_6]=-2E_1\\
[E_1,E_7]=0,  
\end{cases}
\begin{cases}
[E_2,E_3]=0  \\ 
[E_2,E_4]=0  \\ 
[E_2,E_5]=-2E_2  \\ 
[E_2,E_6]=0 \\
[E_2,E_7]=-2E_4, 
\end{cases}\begin{cases}
[E_3,E_4]=E_2  \\ 
[E_3,E_5]=-E_3  \\ 
[E_3,E_6]=E_3\\ 
[E_3,E_7]=E_5-E_6,
\end{cases}\\ 
 & \begin{cases}
[E_4,E_5]=-E_4 \\ 
[E_4,E_6]=-E_4 \\ 
[E_4,E_7]=-E_1,
\end{cases}\text{ }\begin{cases}
[E_5,E_6]=0 \\ 
[E_5,E_7]=-E_7, 
\end{cases}\text{ }\text{ }[E_6,E_7]=E_7.
\end{align*}
These vector fields form a Lie subalgebra, denoted by $\mathfrak{g}$, of the Lie algebra of formal vector fields on  $\widehat{\mathcal{W}}$. Of course, the restriction of $E_i$ on the central fiber is nothing but $E_i'$ ($i=1,\ldots,7$).

From the previous section, we can assume that our seven vector fields are of the form 
$$E_i=g_i(v,t)\frac{\partial }{\partial v}+(\alpha_i(v,t)y^2+\beta_i(v,t)y+\gamma_i(v,t))\frac{\partial }{\partial y}+k_i(t)\frac{\partial }{\partial t},$$(cf. Lemma 3.1) where $A,B,C,a,b,c,e$ are formal power series in $t$ ($i=1,\ldots,7$).

\begin{thm}
The action of $G$ on $\F$ does not extend to the formal semi-universal deformation  $\widehat{\mathcal{W}}$, where $G$ is the automorphism group of $\F$.
\end{thm}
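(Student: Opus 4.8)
The plan is to argue by contradiction, following the strategy sketched in the introduction. Assume the seven formal vector fields $E_1,\dots,E_7$ on $\widehat{\mathcal W}$ exist, restricting to $E_1',\dots,E_7'$ on the central fiber and satisfying the displayed bracket relations. By Lemma 3.1 each $E_i$ is determined by formal power series $A_i,B_i,C_i,a_i,b_i,c_i,e_i,k_i\in\mathbb C[[t]]$ subject to the constraint $b_it^2+e_it+B_it-k_i=0$; in particular each $k_i$ is divisible by $t$, so after projecting to the base the induced formal vector field on $\spf(\mathbb C[[t]])$ vanishes to order $\ge 1$ at $0$. First I would introduce the decreasing filtration $F^\bullet$ on $\mathfrak g$ (and on the ambient Lie algebra of formal vector fields) by vanishing order at $t=0$: $F^p$ consists of those vector fields whose coefficient power series all lie in $t^p\mathbb C[[t]]$ after an appropriate normalization. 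The key structural observation is that $[F^p,F^q]\subseteq F^{p+q}$-type behaviour holds, and more importantly that the ``leading term'' of each $E_i$ at $t=0$ recovers $E_i'$, which is a genuine vector field on $\F$.

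The heart of the argument is a reduction modulo the generic fiber. Fix $t$ generic; then $\mathcal W_t\cong\mathbb P^1\times\mathbb P^1$, whose automorphism group has Lie algebra $\mathfrak{sl}_2\times\mathfrak{sl}_2$ (plus the swap, but that is not in the connected component). Passing to the field $K=\mathbb C((t))$ of formal Laurent series — i.e. inverting $t$ — the seven vector fields $E_1,\dots,E_7$ give elements of a Lie algebra that, by the explicit form in Lemma 3.1 and the fact that the relative tangent sheaf of $\mathcal W^\ast\to\mathbb C^\ast$ along the $\mathbb P^1\times\mathbb P^1$ fibers has global sections $\mathfrak{sl}_2(K)\times\mathfrak{sl}_2(K)$, embed into $\mathfrak{sl}_2(K)\times\mathfrak{sl}_2(K)$ once one quotients out the direction $\partial/\partial t$. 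Concretely: because every $k_i\in t\mathbb C[[t]]$, the span $\mathbb C[[t]]\cdot\{E_1,\dots,E_7\}$ maps into the vertical vector fields after inverting $t$, and the relation $(3.6)$ shows that over $K$ the seven images satisfy one $K$-linear relation coming from the base direction, cutting the rank down to six — matching Remark 3.2. Then I would isolate inside $\mathfrak g\otimes_{\mathbb C[[t]]}K$ the abelian pieces: from the bracket table, $E_1,E_2,E_4$ pairwise commute ($[E_1,E_2]=0$, $[E_1,E_4]=0$, $[E_2,E_4]=0$), and I would argue that their images in $\mathfrak{sl}_2(K)\times\mathfrak{sl}_2(K)$ remain $K$-linearly independent. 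This produces a $3$-dimensional abelian $K$-subalgebra of $\mathfrak{sl}_2(K)\times\mathfrak{sl}_2(K)$.

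To finish, I invoke the elementary fact that a maximal abelian subalgebra of $\mathfrak{sl}_2(K)$ is $1$-dimensional (any two commuting elements of $\mathfrak{sl}_2$ over a field are proportional, since a nonzero element of $\mathfrak{sl}_2$ has centralizer exactly its own line — this holds over any field, using that $\mathfrak{sl}_2$ is $3$-dimensional and the adjoint action of a nonzero element has a $1$-dimensional kernel in the semisimple or regular-nilpotent case, and one checks the nilpotent case directly). Hence a maximal abelian subalgebra of $\mathfrak{sl}_2(K)\times\mathfrak{sl}_2(K)$ has dimension at most $2$. A $3$-dimensional abelian subalgebra therefore cannot exist, contradicting the previous paragraph; so no such $E_1,\dots,E_7$ exist, and the $G$-action does not extend.

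The main obstacle, and the step requiring the most care, is establishing the $K$-linear independence of the images of $E_1,E_2,E_4$ in $\mathfrak{sl}_2(K)\times\mathfrak{sl}_2(K)$ — equivalently, ruling out that a $\mathbb C[[t]]$-linear combination of $E_1,E_2,E_4$ with at least one unit coefficient could collapse (become proportional to $\partial/\partial t$, or vanish vertically) after inverting $t$. This is where I must use that the restrictions $E_1',E_2',E_4'$ to the central fiber $\F$ are linearly independent vector fields (they correspond to $e_1,e_2,e_4$ in the chosen basis of $\lie(G)$, namely the nilradical directions $(1,0,0)$, $(0,0,1)$, $(0,1,0)$ acting on $\F$), together with the explicit normal forms $(3.5)$: a vanishing $K$-linear relation among the $E_i$ would, after clearing denominators and reducing mod $t$, force a nontrivial $\mathbb C$-linear relation among $E_1',E_2',E_4'$ on $\F$, which is false. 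Once that independence is pinned down, the rest is the soft Lie-theoretic contradiction above.
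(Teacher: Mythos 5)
Your overall architecture matches the paper's: pass to the generic fiber over the Laurent series field $K$, exhibit a $3$-dimensional abelian subalgebra of $\mathfrak{sl}_2(K)\times\mathfrak{sl}_2(K)$ spanned by the images of the pairwise-commuting $E_1,E_2,E_4$, and contradict $\operatorname{rank}(\mathfrak{sl}_2)=1$ by projecting to one factor. The endgame and your linear-independence check (clear denominators and reduce mod $t$ to the independent fields $E_1',E_2',E_4'$ on $\F$) are fine, and the latter is a point the paper leaves implicit.

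The gap is in how you get $E_1,E_2,E_4$ into $\mathfrak{sl}_2(K)\times\mathfrak{sl}_2(K)$ in the first place. You propose to ``quotient out the direction $\partial/\partial t$'' and assert that, since each $k_i\in t\,\mathbb C[[t]]$, the $E_i$ ``map into the vertical vector fields after inverting $t$.'' Neither step works. Inverting $t$ does not kill $k_i(t)\,\partial/\partial t$. More importantly, the vertical vector fields are the \emph{kernel} of the Lie homomorphism $\delta:E\mapsto k(t)\,\partial/\partial t$, i.e.\ an ideal, not a quotient: the projection $E\mapsto E-k(t)\,\partial/\partial t$ onto the vertical part is not a Lie algebra homomorphism, because $[k_1\partial_t,V]$ has a nonzero vertical component whenever the coefficients of a vertical field $V$ depend on $t$. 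Concretely, writing $E_i=V_i+k_i\partial_t$, the relation $[E_1,E_2]=0$ only yields $[V_1,V_2]=k_2\,\partial_t V_1-k_1\,\partial_t V_2$, which need not vanish; so commutativity of the vertical parts, which is the whole point, does not follow. What is actually needed, and what the paper spends the first half of its proof establishing, is that $k_1$, $k_2$, $k_4$ are \emph{identically zero}, so that $E_1,E_2,E_4$ are honestly vertical. This is a genuine computation: apply $\delta$ to get $F_i=k_i\partial_t$, use the filtration inclusion $[F^p\mathfrak v,F^p\mathfrak v]\subset F^{2p}\mathfrak v$ together with $[F_1,F_6]=-2F_1$, $[F_2,F_5]=-2F_2$, $[F_1,F_3]=-2F_4$ to kill the order-one coefficients of $k_1,k_2,k_4$, and then run a case analysis on $[F_4,F_5]=-F_4$, $[F_1,F_5]=0$, $[F_1,F_3]=-2F_4$, $[F_3,F_4]=F_2$, $[F_4,F_7]=-F_1$ to force $k_4=k_2=k_1=0$. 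Without this step (or an equivalent substitute) your argument does not close.
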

\begin{proof}

We denote by $\mathfrak{v}$ the Lie algebra of formal vector fields in one variable $t$. Let $\delta:$ $\mathfrak{g}\rightarrow \mathfrak{v}$ be the map which sends $$g_i(v,t)\frac{\partial }{\partial v}+(\alpha_i(v,t)y^2+\beta_i(v,t)y+\gamma_i(v,t))\frac{\partial }{\partial y}+k_i(t)\frac{\partial }{\partial t}$$ to $$k_i(t)\frac{\partial }{\partial t},$$ for $i=1,\ldots, 7$. Since, the first two components $\frac{\partial }{\partial v}$ and $\frac{\partial }{\partial y}$ contribute nothing to the component $\frac{\partial }{\partial t}$ in the Lie bracket then $\delta$ is a well-defined Lie homomorphism. Set $F_i:=\delta(E_i)=k_i(t)\frac{\partial }{\partial t}$ ($i=1,\ldots,7$). Note that the seven formal vector fields $F_i$ ($i=1,\ldots, 7$) are nothing but those induced by the $G$-action on the base $\spf(\mathbb{C}[[t]])$ (cf. the last paragraph of Section 1). Observe also that $\mathfrak{v}$ can be equipped with a filtration $F$ given by the vanishing order at $0$ and we have two well-known facts
$$[F^p\mathfrak{v},F^p\mathfrak{v}]\subset F^{2p}\mathfrak{v}, \text{ and } [F^p\mathfrak{v},F^q\mathfrak{v}]\subset F^{p+q-1}\mathfrak{v},$$ for $p,q \geq 1$. Furthermore, the vanishing order of all $k_i$ at $0$ is at least $1$. Let $k_i(t)=\sum_{j=1}^{\infty}a^i_jt^j$ ($i=1,2,4,5$). Using the first fact and the Lie relations induced by $\delta$: 
$$\begin{cases}
[F_1,F_6]=-2F_1 \\ 
[F_2,F_5]=-2F_2\\ 
[F_1,F_3]=-2F_4,
\end{cases}$$
 we obtain $a_1^1=a_1^2=a_1^4=0$. Suppose that $k_4(t)$ is not identically zero, then there exists $j^*\geq 2$ such that $a_{j^*}^4$ is nonzero. By computing explicitly the Lie relation $[F_4,F_5]=-F_4$ in terms of power series in $t$ and then by equalizing coefficients, we get that 
$$a_j^4[(j-1)a_1^5-1]=0,$$ for all $j\geq 2$. Thus, $a_1^5=\frac{1}{j^*-1}$, which is clearly nonzero. A similar computation for the relation $[F_1,F_5]=0$ gives
$$(j-1)a_j^1a_1^5=0,$$ for all $j\geq 2$. Hence, all $a_j^1=0$ so that $k_1(t)=0$. By the relation $[F_1,F_3]=-2F_4$, we deduce that $k_4(t)=0$, a contradiction. Therefore, $k_4(t)=0$. From the relations $[F_3,F_4]=F_2$ and $[F_4,F_7]=-F_1$, we obtain that $k_2(t)=0$ and $k_1(t)=0$. As a sequence, $E_1,E_2$, and $E_4$ do not have the component $\frac{\partial }{\partial t}$.

In addition, by the proof of Theorem 3.1, as a scheme over $\mathbb{C}$, $$\mathcal{W}^*\cong \mathbb{C}^*\times  \mathbb{P}^1\times \mathbb{P}^1.$$  Then, $$\mathcal{W}^*\cong \mathbb{P}^1_{L}\times \mathbb{P}^1_{L},$$ as a scheme over $L$, where $L:=\mathbb{C}[t,t^{-1}]$ and $\mathbb{P}^1_{L}$ is the $1$-projective space over $L$. Therefore, the generic fiber $\widehat{\mathcal{W}}^*$ of $\widehat{\mathcal{W}}$ is isomorphic to  $\mathcal{W}^* \times_{\Sp(\mathbb{C}[t,t^{-1}])} \Sp(\mathbb{C}[[t,t^{-1}]])=\mathbb{P}^1_{K}\times \mathbb{P}^1_{K}$, as a scheme over $K$, where $K$ is the field of Laurent formal power series $\mathbb{C}[[t,t^{-1}]]$. Now, by restricting on the generic fiber of $\widehat{\mathcal{W}}$, we obtain that $E_1,E_2$, and $E_4$ are formal vector fields on  $\widehat{\mathcal{W}}^*$, considered as a $\mathbb{C}$-scheme. However, by the first paragraph, we have proved that there is no component $\frac{\partial }{\partial t}$ in the expression of $E_i$ ($i=1,2,4$). So, if we think of $E_1$, $E_2$ and $E_4$ as vector fields with coefficients in $K$, then they are definitely vector fields on $\widehat{\mathcal{W}}^*$, regarded as a scheme over $K$. Note that the Lie algebra of vector fields on $\mathbb{P}^1_K\times\mathbb{P}^1_K$ is isomorphic to $\mathfrak{sl}_2(K)\times \mathfrak{sl}_2(K)$, where $\mathfrak{sl}_2(K)$ is the special linear group. This means that there exists a $3$-dimensional abelian Lie subalgebra of $\mathfrak{sl}_2(K)\times \mathfrak{sl}_2(K)$. The image of that subalgebra under one of the two canonical projections of the product $\mathfrak{sl}_2(K)\times \mathfrak{sl}_2(K)$  provides a $2$-dimensional abelian Lie subalgebra in $\mathfrak{sl}_2(K)$. This is a contradiction since $\text{rank}(\mathfrak{sl}_2(K))$ is only $1$.
\end{proof}
\begin{rem} A naturally posed question is if the $G$-action extends to $\mathcal{W}_n$ over $\Sp(\mathbb{C}[t]/(t^{n+1}))$ for small value $n$. Although the above proof does not give any clue to reply to this question, the answer is yes for $n=1$. More general, if $G$ is an algebraic group acting algebraically on a projective variety $X$ and $\pi: \mathcal{X} \rightarrow S$ is the semi-universal deformation of $X$ then the $G$-action on $X$ certainly extends up to the first infinitesimal deformation $\mathcal{X}_1$ over $S_1$. This follows easily from the semi-universality of the family $\pi: \mathcal{X} \rightarrow S$.
Unfortunately, our example turns out to be the worst case. More precisely, we can even show that the $G$-action on $\mathbb{F}_2$ can not extend to the second infinitesimal $\mathcal{W}_2$ over $\Sp(\mathbb{C}[t]/(t^3))$ by extending $E'_i$ ($i=1,\dots,7$) together with their Lie bracket relations, order by order with respect to $t$. However, the computations are somewhat lengthy and complicated.
\end{rem}

\bibliographystyle{amsplain}

\begin{thebibliography}{10}

\bibitem {B} J. Blanc, \textit{Finite subgroups of the Cremona group of the plane.} The 35th Autumn School in Algebraic Geometry, Poland, September 23 – September 29 (2012).

\bibitem {H} R. Hartshorne, \textit{Algebraic Geometry},
Springer Verlag Graduate Texts in Mathematics, Vol 52, (1977).

\bibitem {K} K. Kodaira, \textit{Complex Manifolds and Deformation of Complex Structures},
Classics in Mathematics, English edn. Springer, Berlin, (2005).

\bibitem {R} D. S. Rim, \textit{Equivariant $G$-structure on versal deformations}, Transactions of the American Mathematical Society, 257(1) (1980): 217–226.

\bibitem {S} M. Schlessinger, \textit{Functors of Artin rings}, Transactions of the American Mathematical Society, 130(2) (1968): 208-222.

\bibitem {S} E. Sernesi, \textit{Deformations of algebraic schemes}, Grundlehren der Mathematischen Wissenschaften, vol. 334. Springer, Berlin (2006).

\bibitem {S} J. J. Wavrik, \textit{Obstructions to the existence of a space of moduli}, Papers in Honour of K. Kodaira, Princeton Univ. Press, (1969), 403-414.

\end{thebibliography}

\end{document}